\newcommand{\ncom}{\newcommand}
\ncom{\beqn}{\begin{eqnarray*}}
\ncom{\eeqn}{\end{eqnarray*}}
\ncom{\beq}{\begin{eqnarray}}
\ncom{\eeq}{\end{eqnarray}}
\ncom{\cal}{\mathcal}
\ncom{\eop}{\hfill{{\rule{2.5mm}{2.5mm}}}}
\ncom{\eoe}{\hfill{{\rule{1.5mm}{1.5mm}}}}
\ncom{\eof}{\hfill{{\rule{1.5mm}{1.5mm}}}}
\ncom{\hone}{\mbox{\hspace{1em}}}
\ncom{\htwo}{\mbox{\hspace{2em}}}
\ncom{\hthree}{\mbox{\hspace{3em}}}
\ncom{\hfour}{\mbox{\hspace{4em}}}
\ncom{\hsev}{\mbox{\hspace{7em}}}
\ncom{\vone}{\vskip 2ex}
\ncom{\vtwo}{\vskip 4ex}
\ncom{\vonee}{\vskip 1.5ex}
\ncom{\vthree}{\vskip 6ex}
\ncom{\vfour}{\vspace*{8ex}}
\ncom{\norm}{\|\;\;\|}
\ncom{\integ}[4]{\int_{#1}^{#2}\,{#3}\,d{#4}}
\ncom{\inp}[2]{\langle{#1},\,{#2} \rangle}
\ncom{\Inp}[2]{\Langle{#1},\,{#2} \Langle}
\ncom{\vspan}[1]{{{\rm\,span}\#1 \}}}
\ncom{\dm}[1]{\displaystyle {#1}}
\newtheorem{theorem}{\bf Theorem}[section]
\newtheorem{proposition}[theorem]{\bf Proposition}%[section]
\newtheorem{lemma}[theorem]{\bf Lemma}%[section]
\newtheoremstyle
    {remarkstyle}
    {}
    {11pt}
    {}
    {}
    {\bfseries}
    {:}
    {     }
    {\thmname{#1} \thmnumber{#2} }
\theoremstyle{remarkstyle}
\newtheorem{remark}[theorem]{\bf Remark}%[section]
\newtheorem{definition}[theorem]{\bf Definition}%[section]
\newtheorem{example}[theorem]{\bf Example}
\begin{document}

\title[An Analytic Model for left invertible WTS]{An Analytic Model for left invertible Weighted Translation Semigroups}

\author[G. M. Phatak]{Geetanjali M. Phatak}
\address{Department of Mathematics, S. P. College\\
Pune- 411030, India}
\email{gmphatak19@gmail.com}

\author[V. M. Sholapurkar]{V. M. Sholapurkar}
\address{ Department of Mathematics, S. P. College\\
Pune- 411030, India}
\email{vmshola@gmail.com}

\date{}

\begin{abstract}
M. Embry and A. Lambert initiated the study of a semigroup of operators $\{S_t\}$ indexed by a non-negative real number $t$ and termed it as weighted translation semigroup. The operators $S_t$ are defined on $L^2(\mathbb R_+)$ by using a  weight function. The operator $S_t$ can be thought of as a continuous analogue of a weighted shift operator.
In this paper, we show that every left invertible operator $S_t$ can be modeled as a multiplication by $z$ on a reproducing kernel Hilbert space $\cal H$ of vector-valued analytic functions on a certain disc centered at the origin and the reproducing kernel associated with $\cal H$ is a diagonal operator. 
As it turns out that every hyperexpansive weighted translation semigroup is left invertile, the model applies to these semigroups. We also describe the spectral picture for the left invertible weighted translation semigroup. In the process, we point out the similarities and differences between a weighted shift operator and an operator $S_t.$ 
\end{abstract}
\subjclass[2010]{Primary 47B20,  47B37; Secondary 47A10, 46E22}
\keywords{weighted translation semigroup, completely alternating, completely hyperexpansive, analytic, operator valued weighted shift }
\maketitle
%\maketitle
%\tableofcontents
%\setcounter{tocdepth}{1}
\section{Introduction}
With a view to develop a continuous analogue of weighted shift operators, M. Embry and A. Lambert initiated the study of operators that can be defined by using a {\it weight function} (rather than a weight sequence)(refer to \cite{EL1}). In fact, for a positive measurable function $\varphi,$ they constructed a semigroup $\{S_t\}$ of bounded linear operators on $L^2(\mathbb R_+),$ parametrized by a non-negative real number $t$ and termed it as {\it weighted translation semigroup}. In \cite{PS}, we continued the work carried out in \cite{EL1},\cite{EL2} and obtained characterizations of hyperexpansive semigroups in terms of their symbols. 
In this paper, we further explore the semigroup $\{S_t\}$ and record some important properties. In our approach, the emphasis is on the study of an operator $S_t,$ for a fixed parameter $t$ and not on the semigroup structure of  $\{S_t\}.$ In the process, we highlight the similarities and differences between a weighted shift operator and an operator $S_t.$ 
In section 2, we set the notation and record some definitions required in the sequel. 
As described in \cite{PS} the term {\it hyperexpansive weighted translation semigroups} include completely hyperexpansive, 2-hyperexpansive, 2-isometric and alternatingly hyperexpansive weighted translation semigroups. 
In section 3, following the process given by S. Shimorin in \cite{Sh}, we construct an analytic model for a left invertible semigroup $\{S_t\}$. We say that the {\it semigroup $\{S_t\}$ is left invertible} if every operator $S_t$ in that semigroup is left invertible. 
Recall that a bounded linear operator $T$ on a Hilbert space $H$ is said to be  {\it analytic} if $\displaystyle \cap_{n\geq 0}\,T^nH=\{0\}.$ The multiplication operator ${\cal M}_z$ on $z$-invariant reproducing kernel Hilbert space of analytic functions defined on a disc in $\mathbb C$ is an example of an analytic operator. A result of S. Shimorin \cite{Sh} asserts that any left invertible analytic operator is unitarily equivalent to the multiplication operator ${\cal M}_z$ on a reproducing kernel Hilbert space $\cal H$ of vector-valued analytic functions defined on a certain disc. In this paper, we prove that the semigroup $\{S_t\}$ is analytic in the sense that the operator $S_t$ is analytic for each $t>0$ and obtain the corresponding reproducing kernel Hilbert space, so as to realize each such left invertible $S_t$ as a multiplication by $z.$ We observe that the reproducing kernel associated with $\cal H$ is diagonal. Also, $\cal H$ admits an orthonormal basis consisting of polynomials in $z.$ We explore the reproducing kernels associated with $\cal H$ in some special types of semigruops $\{S_t\}.$ In this section, we also prove that the semigroup $\{S_t\}$ has wandering subspace property. As a consequence, we prove that a left invertible operator $S_t,~t>0$ is an operator valued weighted shift. We recall that every hyperexpansive weighted translation semigroup is left invertible (\cite[Remark 4.2]{PS}). Thus the analytic model developed in this section applies to hyperexpansive weighted translation semigroups, allowing us to view these operators as the multiplication by $z$ on a suitable reproducing kernel Hilbert space.  
In section 4, we describe the spectral picture of a left invertible weighted translation semigroup $\{S_t\}$. We know that for a weighted shift operator, the point spectrum is empty and the spectrum is a always a disc. We observe that these properties are also shared by a left invertible operator $S_t,t>0.$ We also obtain a formula for the spectral radius of $S_t$ in terms of the symbol $\varphi.$ However, we point out that unlike a weighted shift operator, for an operator $S_t, t>0,$  the kernel of ${S_t}^*$ is infinite dimensional. 

\section{Preliminaries}
Let $\mathbb R_+$ be the set of non-negative real numbers and let $L^2({\mathbb R_+})$ denote the Hilbert space of complex valued square integrable Lebesgue measurable functions on $\mathbb R_+.$ Let ${\cal B}(L^2)$ denote the algebra of bounded linear operators on $L^2({\mathbb R_+}).$ 
\begin{definition}
For a measurable, positive function $\varphi$ defined on $\mathbb R_+$ and $t\in \mathbb R_+,$ define the function $\varphi_t : \mathbb R_+ \rightarrow \mathbb R_+$ by 
\begin{equation*}
\varphi_t(x) =
\begin{cases}
\displaystyle \sqrt {\frac{\varphi(x)}{\varphi(x-t)}} & \text{if~ $x\geq t$},\\
0 & \text{if~ $x<t$}.
\end{cases}
\end{equation*}
Suppose that $\varphi_t$ is essentially bounded for every $t \in \mathbb R_+$.
For each fixed $t\in \mathbb R_+,$ we define $S_t$ on $L^2({\mathbb R_+})$ by  
\begin{equation*}
S_tf(x) =
\begin{cases}
\varphi_t(x)f(x-t) & \text{if~ $x\geq t$},\\
0 & \text{if~ $x<t$}.
\end{cases}
\end{equation*}
\end{definition}
\begin{remark}
Substituting $\varphi_t$ in the above definition, we get
\begin{equation*}
S_tf(x) =
\begin{cases}
\displaystyle \sqrt {\frac{\varphi(x)}{\varphi(x-t)}}f(x-t) & \text{if~ $x\geq t$},\\
0 & \text{if~ $x<t$}.
\end{cases}
\end{equation*}
It is easy to see that for every $t\in \mathbb R_+,~S_t$ is a bounded linear operator on $L^2({\mathbb R_+})$ with $\|S_t\|=\|\varphi_t\|_\infty,$  where $\|\varphi_t\|_\infty$ stands for the essential supremum of $\varphi_t$ given by \beqn \|\varphi_t\|_\infty= \inf  \{M \in \mathbb R: \varphi_t(x)\leq M ~\rm{almost~everywhere}\}. \eeqn
The family $\{S_t:t\in \mathbb R_+\}$ in ${\cal B}(L^2)$ is a semigroup with $S_0=I,$ the identity operator and for all $t,s~\in \mathbb R_+$, $S_t\circ S_s=S_{t+s}.$ 
\end{remark}

We say that $\varphi_t$ is a {\it weight function corresponding to the operator $S_t$}. Further, the semigroup $\{S_t:t\in \mathbb R_+\}$ is referred to as the {\it weighted translation semigroup with symbol $\varphi$}.
Throughout this article, we assume that the symbol $\varphi$ is a continuous function on $\mathbb R_+.$ 

In \cite[Corollary 3.3]{PS}, characterizations of some special types of the semigroup $\{S_t\}$ such as  subnormal contractions, completely hyperexpansive, 2-hyperexpansive, alternatingly hyperexpansive and $m$-isometries are obtained in terms of their symbols.  
The special classes of functions characterizing these classes of operators have been studied extensively in the literature (refer to \cite{BCR},\cite{SSV},\cite{W-1}). 
%We record the definitions of these functions for the sake of completeness. 
%A $C^\infty$ function $f:\mathbb R_+ \rightarrow \mathbb R$ is called  
%\begin{enumerate} 
%\item 
%{\it completely monotone} if $$(-1)^kf^{(k)}(x)\geq 0, ~{\rm for~ all}~ k\geq 0,$$ where $f^{(k)}$ denotes the k$^{\rm {th}}$ derivative of $f.$
%\item 
%{\it completely alternating} if $$(-1)^{k-1} f^{(k)}(x)\geq 0,~~\mbox{for all}~ k\geq 1.$$
%\item 
%{\it absolutely monotone} if $$f^{(k)}(x)\geq 0, ~{\rm for~ all}~ k\geq 0.$$ 
%\end{enumerate}
We find it convenient to record the definitions of the classes of operators under consideration for ready reference. For a detailed account on these classes of operators, the reader is referred to \cite{AS},\cite{At},\cite{Co},\cite{Jb1},\cite{Jb2},\cite{SA}. 
Let $T$ be a bounded linear operator on a Hilbert space $H$ and $n$ be a positive integer. 
Let $B_n(T)$ denote the operator
\begin{equation} \label{B-n-T}
B_n(T)= \sum_{k=0}^n(-1)^k{n\choose k} {T^*}^kT^k.
\end{equation}
An operator $T$ is said to be 
\begin{enumerate} 
\item  {\it subnormal} if there exist a Hilbert space $K$ containing $H$ and a normal operator $N\in\cal B(K)$ such that $N{H}\subseteq H$ and $N|_{H}=T.$
\item {\it completely hyperexpansive} if 
$B_n(T) \leq 0,~ \rm{for~all~integers~} n\geq 1.$
\item {\it $m$-hyperexpansion} if 
$B_n(T) \leq 0,~ \rm{for~all~integers~} n,~ 1\leq n\leq m.$
\item {\it alternatingly hyperexpansive} if $$\sum_{k=0}^n(-1)^{n-k}{n\choose k} {T^*}^kT^k\geq 0, ~\rm{for~all~integers~} n\geq 1.$$  
\item {\it $m$-isometry} if $B_m(T)= 0.$ 
\item {\it hyponormal} if $T^*T-TT^*\geq 0.$ 
\item {\it contraction (expansion)} if $I-T^*T\geq 0 ~~(I-T^*T\leq 0).$
\end{enumerate} 

\section{Analytic model for weighted translation semigroup $\{S_t\}$}
Recall that a bounded linear operator $T$ on a Hilbert space $H$ is analytic if $\displaystyle \cap_{n\geq 0}T^nH=\{0\}.$ We say that the semigroup $\{S_t\}$ is analytic, if for every $t>0,$ the operator $S_t$ is analytic. 
In this section, we prove that the semigroup $\{S_t\}$ is analytic.  We further show that every left invertible operator $S_t,t>0$ can be modeled as a multiplication operator $ {\cal M}_z$ on a reproducing kernel Hilbert space (RKHS) $\cal H$ of vector-valued analytic functions on a disc centered at the origin. It turns out that the reproducing kernel associated with $\cal H$ is a diagonal operator. Also $\cal H$ admits an orthonormal basis consisting of polynomials in $z.$ We compute the reproducing kernels in some concrete cases. 

\subsection{Analytic property of $\{S_t\}$}
The following result is crucial in the construction of an analytic model for an operator $S_t.$  
\begin{theorem} \label{p17} For every $t>0,$ the operator $S_t$ is analytic. \end{theorem}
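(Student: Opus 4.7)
The plan is essentially a direct support-of-range argument, exploiting the semigroup identity $S_t^n = S_{nt}$ established in the preliminaries.

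First I would observe that because $\{S_t\}$ is a semigroup with $S_t \circ S_s = S_{t+s}$, iterating gives $S_t^n = S_{nt}$ for every positive integer $n$. So the study of $\bigcap_{n\geq 0} S_t^n L^2(\mathbb{R}_+)$ reduces to the study of ranges of the individual operators $S_{nt}$.

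Next, I would unpack the definition of $S_{nt}$: for $f \in L^2(\mathbb{R}_+)$,
\begin{equation*}
S_{nt}f(x) = \begin{cases} \sqrt{\dfrac{\varphi(x)}{\varphi(x-nt)}}\, f(x-nt) & \text{if } x \geq nt, \\ 0 & \text{if } x < nt. \end{cases}
\end{equation*}
In particular, every function in the range of $S_{nt}$ vanishes almost everywhere on the interval $[0,nt)$. Consequently, any $g \in \bigcap_{n\geq 0} S_t^n L^2(\mathbb{R}_+) = \bigcap_{n\geq 0} S_{nt} L^2(\mathbb{R}_+)$ must vanish almost everywhere on $[0,nt)$ for every $n \geq 1$.

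Finally, since $t>0$ and $\bigcup_{n\geq 1}[0,nt) = \mathbb{R}_+$, such a $g$ vanishes almost everywhere on $\mathbb{R}_+$, hence is the zero element of $L^2(\mathbb{R}_+)$. This gives $\bigcap_{n\geq 0} S_t^n L^2(\mathbb{R}_+) = \{0\}$, which is exactly the definition of $S_t$ being analytic. There is no real obstacle here; the proof is short once one recognizes that the semigroup property lets one trade iteration for a single large translation, so that the range of $S_t^n$ sits inside functions supported in $[nt,\infty)$.
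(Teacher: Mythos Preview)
Your proof is correct and follows essentially the same route as the paper: the paper directly computes $S_t^k f$ (which amounts to your observation $S_t^n=S_{nt}$), notes that $S_t^k(L^2(\mathbb R_+))\subseteq \chi_{[kt,\infty)}L^2(\mathbb R_+)$, and then concludes $\bigcap_k \chi_{[kt,\infty)}L^2(\mathbb R_+)=\{0\}$ exactly as you do. The only cosmetic difference is that you phrase the support inclusion via the semigroup identity rather than by iterating $S_t$ explicitly.
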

\begin{proof} We first observe that $S_t^k(L^2(\mathbb R_+))\subseteq \chi_{[kt,\infty )}L^2(\mathbb R_+)$ for $k=0,1,2,\cdots ,$\\$t>0,$ where $\chi_A$ stands for the characteristic function of the set $A.$  
For $f\in L^2(\mathbb R_+),$  
\begin{equation*}
(S_t^kf)(x) =
\begin{cases}
\displaystyle \sqrt{\frac{\varphi(x)}{\varphi(x-kt)}}f(x-kt) & \text{if~ $x\geq kt$},\\
0 & \text{if~ $x<kt$}.
\end{cases}
\end{equation*}
Therefore $S_t^kf \in \chi_{[kt,\infty )}L^2(\mathbb R_+).$ 
We now prove that for every $t>0,$ the operator $S_t$ is analytic. In view of the observation above, it is sufficient to prove that 
$\cap_{k=0}^\infty \chi_{[kt,\infty )}L^2(\mathbb R_+)=\{0\}.$ 
Let $f\in \displaystyle \cap _{k=0}^\infty \chi_{[kt,\infty )}L^2(\mathbb R_+).$ Then $f(x)=0$ for all $x<kt,$ for each $k\geq 0.$ This forces that $f(x)=0$ for all $x\in [0,\infty).$ Thus we have 
$\displaystyle \cap _{k=0}^\infty \chi_{[kt,\infty )}L^2(\mathbb R_+)=\{0\}$ implying that for every $t>0,$ the operator $S_t$ is analytic.   \end{proof} 

We now show that the semigroup $\{S_t\}$ possesses wandering subspace property in the sense that for every $t>0,$ the operator $S_t$ has wandering subspace property. Recall that an operator $T$ on a Hilbert space $H$ possesses the {\it wandering subspace property} if $H=[E]_T,$ where $[E]_T$ denotes the smallest $T$-invariant subspace containing $E=$ ker $T^*.$ 
The following lemma gives a useful description of kernel of the adjoint $S_t^*,$ for $t>0.$
\begin{lemma} \label{p27} For $t>0,$ ker $S_t^*=E=\chi_{[0,t)}L^2(\mathbb R_+).$ In particular, ker $S_t^*$ is infinite dimensional. \end{lemma}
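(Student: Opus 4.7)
The plan is to compute the adjoint $S_t^*$ explicitly and then read off its kernel. First I would use the defining relation $\langle S_t f, g \rangle = \langle f, S_t^* g \rangle$ for arbitrary $f, g \in L^2(\mathbb{R}_+)$. Writing out the left-hand side gives
\begin{equation*}
\langle S_t f, g \rangle = \int_t^\infty \sqrt{\frac{\varphi(x)}{\varphi(x-t)}}\, f(x-t)\, \overline{g(x)}\, dx,
\end{equation*}
and the substitution $y = x - t$ converts this into $\int_0^\infty f(y) \overline{\sqrt{\varphi(y+t)/\varphi(y)}\, g(y+t)}\, dy$. This identifies
\begin{equation*}
(S_t^* g)(y) = \sqrt{\frac{\varphi(y+t)}{\varphi(y)}}\, g(y+t), \qquad y \ge 0.
\end{equation*}

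Next I would characterize the kernel. Since $\varphi$ is a strictly positive continuous function on $\mathbb{R}_+$, the weight $\sqrt{\varphi(y+t)/\varphi(y)}$ is nonzero for every $y \geq 0$. Hence $S_t^* g = 0$ in $L^2(\mathbb{R}_+)$ if and only if $g(y+t) = 0$ for almost every $y \geq 0$, which is equivalent to $g(x) = 0$ for almost every $x \geq t$. In other words, $g \in \ker S_t^*$ precisely when $g$ is supported in $[0,t)$, giving $\ker S_t^* = \chi_{[0,t)} L^2(\mathbb{R}_+)$.

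Finally, since $\chi_{[0,t)} L^2(\mathbb{R}_+)$ is naturally isometric to $L^2([0,t))$, which is an infinite-dimensional Hilbert space, the kernel of $S_t^*$ is infinite dimensional, completing the proof.

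I do not anticipate any serious obstacle here: the only mild care is the change of variable in the integral and the use of positivity of $\varphi$ to conclude that the vanishing of $\sqrt{\varphi(y+t)/\varphi(y)}\, g(y+t)$ forces the vanishing of $g$ on $[t,\infty)$. Both are routine.
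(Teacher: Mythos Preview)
Your argument is correct and follows essentially the same route as the paper's proof: both use the explicit formula $(S_t^*g)(y)=\sqrt{\varphi(y+t)/\varphi(y)}\,g(y+t)$ and the positivity of $\varphi$ to conclude that $S_t^*g=0$ exactly when $g$ vanishes a.e.\ on $[t,\infty)$. The only minor differences are that you derive the adjoint formula from the inner-product identity (the paper simply quotes it) and you argue both inclusions simultaneously rather than as two separate containments; neither changes the substance of the proof.
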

\begin{proof} Let $g \in \chi_{[0,t)}L^2(\mathbb R_+).$ 
Then $g(x)=\chi_{[0,t)}(x)f(x)$ almost everywhere on $\mathbb R_+$ for some function $f\in L^2(\mathbb R_+).$ Now
$$(S_t^*g)(x) = (S_t^*\chi_{[0,t)}f)(x) =\sqrt{\frac{\varphi(x+t)}{\varphi(x)}}\chi_{[0,t)}(x+t)f(x+t)=0,~\rm{for ~all} ~x\geq 0.$$  Therefore $g\in E.$ 
Conversely, assume that $f\in E.$ Then $$(S_t^*f)(x)=\sqrt{\frac{\varphi(x+t)}{\varphi(x)}}f(x+t)=0,~\rm{for ~all} ~x\geq 0.$$ This implies $f(x+t)=0$ almost everywhere on $\mathbb R_+.$ Hence $f$ vanishes almost everywhere on the interval $[t,\infty).$ Therefore $f\in \chi_{[0,t)}L^2(\mathbb R_+).$ \end{proof} 

\begin{remark} In the light of the fact that for a weighted shift operator $T,$ ker $T^*$ is one dimensional \cite[Proposition 6.3]{Co}, Lemma \ref{p27} indicates that the theories of weighted shift operators and weighted translation semigroups are intrinsically different. \end{remark}

\begin{proposition} \label{p19} For every $t>0,$ the operator $S_t$ possesses wandering subspace property.\end{proposition}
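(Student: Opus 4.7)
The plan is to exhibit the orthogonal decomposition
\[
L^2(\mathbb R_+) \;=\; \bigoplus_{k=0}^{\infty} \chi_{[kt,(k+1)t)} L^2(\mathbb R_+)
\]
and to show that each summand coincides with $S_t^k E$, where $E = \ker S_t^* = \chi_{[0,t)} L^2(\mathbb R_+)$ is the subspace identified in Lemma \ref{p27}. Once this is established, $[E]_{S_t}$ contains every $S_t^k E$ and hence all of $L^2(\mathbb R_+)$, which is precisely the wandering subspace property.

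The inclusion $S_t^k E \subseteq \chi_{[kt,(k+1)t)} L^2(\mathbb R_+)$ is immediate from the explicit formula for $S_t^k$ recorded in the proof of Theorem \ref{p17}, since elements of $E$ vanish outside $[0,t)$ and hence $(S_t^k g)(x)$ vanishes outside $[kt,(k+1)t)$. For the reverse inclusion, I would take $h \in \chi_{[kt,(k+1)t)} L^2(\mathbb R_+)$ and write down the candidate preimage
\[
g(y) \;=\; \chi_{[0,t)}(y)\,\sqrt{\varphi(y)/\varphi(y+kt)}\; h(y+kt).
\]
A routine change of variable gives $S_t^k g = h$, so the only substantive point is to verify $g \in L^2(\mathbb R_+)$. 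This is where I would invoke the standing hypothesis that $\varphi$ is continuous and strictly positive on $\mathbb R_+$: on the compact interval $[0,(k+1)t]$, $\varphi$ attains a positive minimum and a finite maximum, so $\varphi(y)/\varphi(y+kt)$ is bounded on $[0,t)$ by a constant $C_k$, which yields $\|g\|_2^2 \leq C_k \|h\|_2^2 < \infty$ and places $g$ in $E$.

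The main obstacle, and indeed the only non-formal step, is the uniform control of $\varphi(y)/\varphi(y+kt)$ for $y \in [0,t)$. The essential boundedness of $\varphi_t$ built into the definition bounds $\varphi(x)/\varphi(x-t)$ from above, but what is needed here is an upper bound on its reciprocal, and this is delivered by the joint continuity–positivity assumption on $\varphi$ via compactness. Notably, the argument requires no further hypothesis on $S_t$; in particular left invertibility is not used, so the wandering subspace property holds for every weighted translation semigroup in the sense of the paper.
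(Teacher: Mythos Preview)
Your argument is correct, but it proceeds differently from the paper. The paper does not attempt to identify $S_t^kE$ with $\chi_{[kt,(k+1)t)}L^2(\mathbb R_+)$; instead it shows directly that $(\vee_{n\ge0}S_t^nE)^\perp=\{0\}$ by taking any $f$ in this orthocomplement, defining $g(y)=f(y+nt)$ on $[0,t)$, and testing $f$ against $S_t^n(\chi_{[0,t)}g)\in S_t^nE$. The resulting identity $\int_{nt}^{(n+1)t}\sqrt{\varphi(x)/\varphi(x-nt)}\,|f(x)|^2\,dx=0$ forces $f=0$ on each $[nt,(n+1)t)$. That testing trick sidesteps the need to bound $\varphi(y)/\varphi(y+kt)$, since $g$ is automatically in $L^2$ as a piece of $f$; positivity of $\varphi$ alone suffices. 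Your route, by contrast, establishes the stronger statement $S_t^kE=\chi_{[kt,(k+1)t)}L^2(\mathbb R_+)$ outright, which actually anticipates and sharpens Lemma~\ref{p28} later in the paper (there only the inclusion $\subseteq$ is proved). The price is the compactness bound on $\varphi$, but this is legitimate under the paper's standing continuity assumption, and the payoff is a cleaner structural picture: the orthogonal decomposition $L^2(\mathbb R_+)=\bigoplus_k S_t^kE$ drops out immediately, which is exactly what is needed for the operator-valued shift model in Theorem~\ref{p26}.
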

\begin{proof} 
It is sufficient to prove that $\displaystyle (\vee _{n=0}^\infty S_t^nE)^\bot =\{0\}.$
Let $f\in \displaystyle (\vee _{n=0}^\infty S_t^nE)^\bot .$ Thus $f$ is orthogonal to 
$(S_t^nE) ~\rm{for~ all}~ n\geq 0.$
For a non-negative integer $n,$ we now prove that $f(x)=0$ almost everywhere on the interval $[nt,(n+1)t).$
Let $g(x)=f(x+nt)$ almost everywhere on $[nt,(n+1)t).$ By Lemma \ref{p27}, $\chi_{[0,t)}g \in E ~{\rm{and}}~ S_t^n\chi_{[0,t)}g \in S_t^nE.$ Now 
\begin{equation*}
(S_t^n\chi_{[0,t)}g)(x) =
\begin{cases}
\displaystyle \sqrt{\frac{\varphi(x)}{\varphi(x-nt)}}\chi_{[0,t)}(x-nt)g(x-nt) & \text{if~ $x\geq nt$},\\
0 & \text{if~ $x<nt$}.
\end{cases}
\end{equation*}
So that 
\begin{equation*}
(S_t^n\chi_{[0,t)}g)(x)=
\begin{cases}
\displaystyle \sqrt{\frac{\varphi(x)}{\varphi(x-nt)}}g(x-nt) & \text{if~ $x\in [nt,(n+1)t)$},\\
0 & \text{otherwise}.
\end{cases}
\end{equation*}
Now $\left\langle f, S_t^n\chi_{[0,t)}g \right\rangle =0$ implies that 
$$\int_{nt}^{(n+1)t} \sqrt{\frac{\varphi(x)}{\varphi(x-nt)}}f(x)\overline{g(x-nt)}dx=0.$$ 
As $\overline{g(x-nt)}=\overline{f(x)},$ we have
$$\int_{nt}^{(n+1)t} \sqrt{\frac{\varphi(x)}{\varphi(x-nt)}}|f(x)|^2dx=0.$$
This implies that $|f(x)|^2=0$ almost everywhere on the interval $[nt,(n+1)t).$ This result is true for every non-negative integer $n.$ Hence $f(x)=0$ almost everywhere on the interval $[0,\infty ).$
Hence for every $t>0,$ the operator $S_t$ possesses the wandering subspace property. \end{proof}

\subsection{Construction of an analytic model}
Let $\{S_t\}$ be a left invertible weighted translation semigroup. 
We now proceed to construct an analytic model for every operator in such a semigroup. 
A notion of the {\it  Cauchy dual} of a left invertible operator was introduced by S. Shimorin in \cite{Sh}. Recall that for a left invertible operator $T,$ the {\it Cauchy dual} $T^\prime$ of $T$ is defined as $T^\prime =T(T^*T)^{-1}.$
Note that a condition that for every $t\in \mathbb R_+, \inf_x \frac{\varphi(x+t)}{\varphi(x)} > 0,$ ensures the left invertibility of the semigroup $\{S_t\}.$ 
Now it is easy to see that the Cauchy dual  $S_t^{\prime}$ of $S_t$ is given by  
\begin{equation*}
S_t^{\prime}f(x) =
\begin{cases}
\displaystyle \frac{1}{\varphi_t(x)}f(x-t) & \text{if~ $x\geq t$},\\
0 & \text{if~ $x<t$}.
\end{cases}
\end{equation*}
Observe that for $t\in \mathbb R_+,$ the family of operators $\{S_t^{\prime}\}$ also forms a semigroup. 
We say that the weighted translation semigroup $\{S_t^{\prime}\}$ is a {\it Cauchy dual of the weighted translation semigroup $\{S_t\}.$ }
Recall that a result of S. Shimorin \cite{Sh} asserts that any left invertible analytic operator $T$ is unitarily equivalent to the multiplication operator ${\cal M}_z$ on a reproducing kernel Hilbert space $\cal H$ of vector-valued analytic functions defined on a disc $D,$ with center origin and radius $r(L)^{-1},$ where $L=T^{\prime^*}$ and $r(L)$ is the spectral radius of $L.$ Let $L_t=S_t^{\prime^*}$ for every $t>0.$ 
We now give formula for the spectral radius of $S_t,~r(S_t)$ in terms of the symbol $\varphi.$
Recall that $\|S_t\|=\|\varphi_t\|_\infty.$
Observe that $$ \|S_t^n\|=\|S_{nt}\|=\Big|\Big|\sqrt {\frac{\varphi(x)}{\varphi(x-nt)}}\Big|\Big|_\infty.$$
Hence by the spectral radius formula, $$r(S_t)= \lim_{n\rightarrow \infty}\|S_t^n\|^{\frac{1}{n}}=\lim_{n\rightarrow \infty}\Big|\Big|\sqrt {\frac{\varphi(x)}{\varphi(x-nt)}}\Big|\Big|_\infty^{\frac{1}{n}}.$$ 
Now $$r(L_t)=r(S_t^{\prime^*})=r(S_t^\prime)=\lim_{n\rightarrow \infty}\Big|\Big|\sqrt {\frac{\varphi(x-nt)}{\varphi(x)}}\Big|\Big|_\infty^{\frac{1}{n}}.$$ 

Recall that $E=$ ker $S_t^*.$ By injectivity of $S_t,$ $S_t^nE \neq \{0\}$ for all positive integers $n.$ Note that for a left invertible operator $S_t,$ the subspace $S_t^nE$ is a closed subspace of $L^2(\mathbb R_+),$ for every positive integer $n.$ We now prove a lemma. 
\begin{lemma} \label{p28} The subspaces $\{S_t^nE\}_{n=0}^\infty$ are mutually orthogonal. \end{lemma}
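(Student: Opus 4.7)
The plan is to deduce the orthogonality from a clean support argument. By Lemma \ref{p27}, every $f \in E$ vanishes almost everywhere outside the interval $[0,t)$. Combining this with the explicit formula for $S_t^n$ recorded at the start of the proof of Theorem \ref{p17},
\begin{equation*}
(S_t^n f)(x) = \sqrt{\frac{\varphi(x)}{\varphi(x-nt)}}\, f(x-nt) \quad \text{for } x \geq nt,
\end{equation*}
and $(S_t^n f)(x) = 0$ for $x < nt$, I observe that for $f \in E$ the factor $f(x-nt)$ is zero unless $x - nt \in [0,t)$. Hence $S_t^n f$ is supported almost everywhere in $[nt,(n+1)t)$, so
\begin{equation*}
S_t^n E \subseteq \chi_{[nt,(n+1)t)} L^2(\mathbb{R}_+).
\end{equation*}

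The main step is now purely set-theoretic: for distinct non-negative integers $m \neq n$, the intervals $[mt,(m+1)t)$ and $[nt,(n+1)t)$ are disjoint, so any function supported in the first is pointwise orthogonal to any function supported in the second in the $L^2(\mathbb{R}_+)$ inner product. Therefore $S_t^m E \perp S_t^n E$ whenever $m \neq n$, and the subspaces $\{S_t^n E\}_{n=0}^\infty$ are mutually orthogonal.

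I do not anticipate any real obstacle here: the entire argument is driven by the fact that $S_t$ translates support to the right by $t$, together with the identification of $E$ as functions supported on $[0,t)$ from Lemma \ref{p27}. In particular, no use of left invertibility or of the specific form of the weight $\varphi_t$ beyond positivity is required for this particular lemma, though left invertibility is needed in the surrounding discussion to ensure that $S_t^n E$ is closed and non-trivial.
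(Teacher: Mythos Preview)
Your proof is correct and follows essentially the same approach as the paper: both establish the inclusion $S_t^n E \subseteq \chi_{[nt,(n+1)t)} L^2(\mathbb{R}_+)$ via Lemma~\ref{p27} and the explicit formula for $S_t^n$, and then conclude orthogonality from the disjointness of the intervals $[nt,(n+1)t)$. Your closing remark that left invertibility plays no role in this particular lemma is accurate and worth keeping.
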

\begin{proof} We first prove that $S_t^nE\subseteq \chi_{[nt,(n+1)t)}L^2(\mathbb R_+)$ for all $n\geq 0.$
Let $f\in E.$ By Lemma \ref{p27}, $f=\chi_{[0,t)}g$ almost everywhere on $\mathbb R_+$ for some function $g\in L^2(\mathbb R_+).$ Then for any $n\geq 0,$
\begin{equation*}
(S_t^nf)(x)=(S_t^n\chi_{[0,t)}g)(x) =
\begin{cases}
\displaystyle \sqrt{\frac{\varphi(x)}{\varphi(x-nt)}}\chi_{[0,t)}(x-nt)g(x-nt) & \text{if~ $x\geq nt$},\\
0 & \text{if~ $x<nt$}.
\end{cases}
\end{equation*}
Therefore
\begin{equation*}
(S_t^nf)(x)=
\begin{cases}
\displaystyle \sqrt{\frac{\varphi(x)}{\varphi(x-nt)}}g(x-nt) & \text{if~ $x\in [nt,(n+1)t)$},\\
0 & \text{otherwise}.
\end{cases}
\end{equation*}
Therefore $S_t^nf \in \chi_{[nt,(n+1)t)}L^2(\mathbb R_+).$
Hence $S_t^nE\subseteq \chi_{[nt,(n+1)t)}L^2(\mathbb R_+)$ for all $n\geq 0.$
Note that for $m\neq n,$ the intervals $[mt,(m+1)t)$ and $[nt,(n+1)t)$ are disjoint. Therefore the functions $\chi_{[mt,(m+1)t)}$ and $\chi_{[nt,(n+1)t)}$ are orthogonal. Hence for $m\neq n,~(S_t^mE) \bot (S_t^nE).$ 
%Hence the subspaces $\{S_t^nE\}$ are mutually orthogonal. 
\end{proof}

\begin{remark} \label{p29} For a left invertible semigroup $\{S_t\},$ the application of Lemma \ref{p28}  to a semigroup $\{S_t^\prime\}$ yields the subspaces $\{{S_t^\prime}^nE\}_{n=0}^\infty$ are also mutually orthogonal. \end{remark}

The following theorem describes the analytic model for the left invertible weighted translation semigroup $\{S_t\}.$ The reader may compare this theorem with \cite[Theorem 2.7]{CT} for a similar model developed in the context of weighted shifts on directed trees.  
\begin{theorem} \label{p21} Let $\{S_t\}$ be a weighted translation semigroup with symbol $\varphi.$ Assume that for any given $t\in \mathbb R_+,~\inf_x \frac{\varphi(x+t)}{\varphi(x)} > 0.$ Let $S_t^\prime$ be a Cauchy dual of the operator $S_t$ and $L_t={S_t^\prime}^*.$
Let $E=$ ker $S_t^*.$ Then there exist a reproducing kernel Hilbert space $\cal H$ of E-valued analytic functions defined on a disc $D_r,$ a disc with center origin and radius $r(L_t)^{-1}$ and a unitary operator $U:L^2(\mathbb R_+)\rightarrow \cal H$ such that ${\cal M}_zU=US_t,$ where the operator ${\cal M}_z$ denotes the multiplication by $z$ on $\cal H.$ 
Further, $U$ maps $E$ onto the subspace $\cal E$ of E-valued constant functions in $\cal H$ satisfying $(Ue)(z)=e$ for all $z\in D_r$ and for every $e\in E.$ We have the following:\\
(i) The reproducing kernel $\displaystyle k_{\cal H}:D_r\times D_r \rightarrow B(E)$ associated with $\cal H$ satisfies for any $e\in E$ and $\lambda \in D_r,~~k_{\cal H}(.,\lambda)e\in \cal H$ and for any $f\in L^2(\mathbb R_+),$ $$\left\langle (Uf)(\lambda), e \right\rangle_E=\left\langle Uf, k_{\cal H}(.,\lambda)e\right\rangle_{\cal H}.$$
(ii) The reproducing kernel $\displaystyle k_{\cal H}(z,\lambda)$ is the diagonal operator on $E$ in the sense: 
$$(k_{\cal H}(z,\lambda)e)(x)=\left(\sum_{n=0}^\infty \frac{\varphi(x)}{\varphi(x+nt)}z^n\overline{\lambda}^n\right)e(x),~~e\in E,~x\in \mathbb R_+.$$
(iii) The E-valued polynomials in $z$ are dense in $\cal H.$ \\
%$${\cal H}=\bigvee_{n\geq 0} \{z^ne~:~e\in {\cal E}\}.$$
(iv) The Hilbert space $\cal H$ admits an orthonormal basis consisting of E-valued polynomials in $z.$ \end{theorem}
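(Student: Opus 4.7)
The plan is to obtain the analytic model from Shimorin's representation theorem \cite{Sh} and then compute each ingredient in closed form using the explicit description of $S_t$, $S_t^\prime$, and $L_t$. Since $S_t^* S_t$ acts on $L^2(\mathbb R_+)$ as multiplication by $\varphi(x+t)/\varphi(x)$, the hypothesis $\inf_x \varphi(x+t)/\varphi(x) > 0$ makes $S_t^*S_t$ invertible, so $S_t$ is left invertible and $S_t^\prime$ is well-defined; combined with Theorem \ref{p17} (analyticity of $S_t$), Shimorin's theorem produces the Hilbert space $\mathcal H$ of $E$-valued analytic functions on $D_r$ with $r = r(L_t)^{-1}$, together with the unitary $U: L^2(\mathbb R_+) \to \mathcal H$ intertwining $S_t$ with $\mathcal M_z$ via the series representation
\[
(Uf)(z) = \sum_{n=0}^\infty (P_E L_t^n f)\, z^n,
\]
where $P_E$ is the orthogonal projection of $L^2(\mathbb R_+)$ onto $E$. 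Convergence on $D_r$ is controlled by the spectral radius of $L_t$.

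Iterating the action of $L_t$ yields $(L_t^n g)(x) = \sqrt{\varphi(x)/\varphi(x+nt)}\, g(x+nt)$. For $e \in E = \chi_{[0,t)} L^2(\mathbb R_+)$ and $n \geq 1$, one has $e(x+nt) = 0$ for every $x \geq 0$, so $L_t^n e = 0$ and therefore $(Ue)(z) = P_E e = e$; this identifies $\mathcal E$ with $E$ through constant functions. Part (i) is the standard reproducing property, a direct consequence of continuity of point evaluation on $\mathcal H$. For the diagonal form in (ii), I would identify the preimage under $U$ of $k_\mathcal H(\cdot,\lambda)e$: since $L_t^* = S_t^\prime$, we have
\[
\langle (Uf)(\lambda), e \rangle_E = \sum_n \lambda^n \langle f, (S_t^\prime)^n e\rangle_{L^2},
\]
so by unitarity of $U$, $U^{-1} k_\mathcal H(\cdot,\lambda) e = \sum_n \bar\lambda^n (S_t^\prime)^n e$, and hence
\[
k_\mathcal H(z,\lambda) e = \sum_{m, n \geq 0} \bigl(P_E L_t^m (S_t^\prime)^n e\bigr)\, z^m \bar\lambda^n.
\]
A support analysis — $(S_t^\prime)^n e$ is supported on $[nt,(n+1)t)$, and $L_t^m$ translates support by $-mt$ modulated by a weight — shows that $P_E L_t^m (S_t^\prime)^n e$ vanishes whenever $m \neq n$, while a direct computation at $m = n$ produces $L_t^n (S_t^\prime)^n e(x) = (\varphi(x)/\varphi(x+nt))\, e(x)$ on $[0,t)$, yielding the asserted formula.

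Parts (iii) and (iv) then follow by combining the intertwining relation with the structural lemmas already in place. The wandering subspace property (Proposition \ref{p19}) gives $L^2(\mathbb R_+) = \overline{\bigoplus_{n=0}^\infty S_t^n E}$, and since $U S_t^n e = z^n e$ for $e \in E$, we obtain $\mathcal H = \overline{\bigoplus_n z^n \mathcal E}$, which forces the $E$-valued polynomials in $z$ to be dense; this is (iii). Lemma \ref{p28} upgrades this to an orthogonal direct sum, so the subspaces $z^n \mathcal E$ are pairwise orthogonal in $\mathcal H$. For (iv), choose any orthonormal basis $\{f_{n,\alpha}\}_\alpha$ of the closed subspace $S_t^n E$ and write $f_{n,\alpha} = S_t^n g_{n,\alpha}$ with $g_{n,\alpha} \in E$; then the collection $\{U f_{n,\alpha}\}_{n,\alpha} = \{z^n g_{n,\alpha}\}_{n,\alpha}$ is an orthonormal basis of $\mathcal H$ consisting of polynomials in $z$.

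The principal obstacle is the kernel computation in (ii): once Shimorin's abstract machinery is invoked, the remaining task is to track supports carefully and show that only the diagonal contributions $m=n$ survive, then identify the surviving operator on $E$ as multiplication by $\varphi(x)/\varphi(x+nt)$. The other steps are essentially bookkeeping matching the general framework against the concrete formulas for $S_t$ and $S_t^\prime$ on $L^2(\mathbb R_+)$.
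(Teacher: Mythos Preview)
Your argument is correct and, for parts (i)--(iii), tracks the paper's proof closely: the paper likewise invokes Shimorin's construction for (i), uses the double-sum kernel formula $k_{\mathcal H}(z,\lambda)e=\sum_{n,k}(P L_t^n(L_t^*)^k i_E)e\,z^n\bar\lambda^k$ and eliminates the off-diagonal terms for (ii), and deduces (iii) from the wandering subspace property. The only real difference in (ii) is cosmetic: you kill the $m\neq n$ terms by a direct support analysis, while the paper packages the same computation as the orthogonality of $\{(S_t^\prime)^n E\}_{n\geq 0}$ (Remark \ref{p29}) and then evaluates $\langle (S_t^\prime)^k f,(S_t^\prime)^j g\rangle$.

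Part (iv) is where you diverge. The paper builds a concrete orthonormal basis of $L^2(\mathbb R_+)$ from Haar wavelets $\widetilde{\psi_{jk}}$ and shows that each $U\widetilde{\psi_{jk}}$ is a polynomial because the compact support of $\widetilde{\psi_{jk}}$ forces $L_t^n\widetilde{\psi_{jk}}=0$ once $nt>(k+1)/2^j$. Your route is more structural: you use the orthogonal decomposition $L^2(\mathbb R_+)=\bigoplus_n S_t^n E$ (Lemma \ref{p28} plus Proposition \ref{p19}), pick an orthonormal basis of each summand, and observe that $U$ sends $S_t^n E$ to the monomial subspace $z^n\mathcal E$. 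Your approach is shorter and avoids importing an external basis; the paper's approach has the minor advantage of exhibiting an explicit, classical basis whose images are polynomials with controlled degree. Both are valid.
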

\begin{proof}
The application of Shimorin's construction \cite{Sh} to the left invertible analytic operator $S_t$ gives the proof of part (i).

A proof of (ii) involves a computation of the reproducing kernel for the RKHS $\cal H$ associated to the operator $S_t,t>0.$ 
The following formula for the reproducing kernel is given in \cite[Corollary 2.14]{Sh}.
For $e\in E,~z,\lambda \in D,$
$$k_{\cal H}(z,\lambda)e =\sum_{n,k\geq 0}(PL^n(L^*)^ki_E)ez^n\overline{\lambda}^k$$
where $i_E$ is the embedding of $E$ into $L^2(\mathbb R_+).$ 
Observe that $$ (L_tf)(x)=\sqrt{\frac{\varphi(x)}{\varphi(x+t)}}f(x+t) ~~\mbox{for all}~ x\geq 0$$ and
\begin{equation*}
L_t^*f(x) =
\begin{cases}
\displaystyle \sqrt {\frac{\varphi(x-t)}{\varphi(x)}}f(x-t) & \text{if~ $x\geq t$}\\
0 & \text{if~ $x<t$}.
\end{cases}
\end{equation*}
Note that $$ L_tL_t^*f(x) =\frac{\varphi(x)}{\varphi(x+t)}f(x)~~\mbox{for all}~ x\geq 0.$$
It is easy to see that for any positive integer $k,$ $$\displaystyle L^k(L^*)^kf(x) =\frac{\varphi(x)}{\varphi(x+kt)}f(x)~~\mbox{for all}~ x\geq 0.$$
Using Remark \ref{p29}, we prove that $(PL_t^j{L_t^*}^k)|_E=0$ for all non-negative integers $j\neq k.$
For any $f,g\in E$ and non-negative integers $j\neq k,$
\beqn
\left\langle PL_t^j{L_t^*}^kf,g\right\rangle_E & = & \left\langle P{{S_t^\prime}^*}^j{S_t^\prime}^kf,g\right\rangle_E = \left\langle {{S_t^\prime}^*}^j{S_t^\prime}^kf,g\right\rangle_{L^2(\mathbb R_+)} \\
& = & \left\langle {S_t^\prime}^kf, {S_t^\prime}^jg\right\rangle_{L^2(\mathbb R_+)}=0. 
\eeqn 
Therefore, $(PL_t^j{L_t^*}^k)|_E=0$ for all non-negative integers $j\neq k.$
 
We now compute the reproducing kernel for the RKHS $\cal H$ associated to the operator $S_t,t>0.$
Note that $E=$ker $S_t^*=$ker $L_t$ is infinite dimensional. For $e\in E,~z,\lambda \in D_r$ and $x\in \mathbb R_+,$
\beqn
 k_{\cal H}(z,\lambda)e(x) &=& \sum_{n,k\geq 0}(PL_t^n(L_t^*)^ki_E)e(x)z^n\overline{\lambda}^k
 = \sum_{n,k\geq 0}(PL_t^n(L_t^*)^k)e(x)z^n\overline{\lambda}^k \\
 &=& \sum_{n=0}^\infty (PL_t^n(L_t^*)^n)e(x)z^n\overline{\lambda}^n
 = \sum_{n=0}^\infty \frac{\varphi(x)}{\varphi(x+nt)}Pe(x)z^n\overline{\lambda}^n \\ 
 &=& \sum_{n=0}^\infty \frac{\varphi(x)}{\varphi(x+nt)}e(x)z^n\overline{\lambda}^n
 = e(x)\sum_{n=0}^\infty \frac{\varphi(x)}{\varphi(x+nt)}z^n\overline{\lambda}^n. \eeqn
Hence $$ k_{\cal H}(z,\lambda)e(x)=\left(\sum_{n=0}^\infty \frac{\varphi(x)}{\varphi(x+nt)}z^n\overline{\lambda}^n\right)e(x).$$

To prove (iii), recall that the operator $S_t,t>0$ possesses the wandering subspace property [Proposition \ref{p19}]. Therefore 
$$L^2(\mathbb R_+)=\bigvee_{n\geq 0} S_t^n(E).$$
Since ${\cal M}_z$ is unitarily equivalent to $S_t,$ it follows that
$${\cal H}=\bigvee_{n\geq 0} {\cal M}_z^n({\cal E}).$$
This proves that the E-valued polynomials in $z$ are dense in $\cal H.$

For proving (iv), we need to describe the orthonormal basis of $L^2(\mathbb R_+)$.
We begin with the following function on $\mathbb R.$ Let
\begin{equation*}
\psi (x) =
\begin{cases}
\displaystyle 1 & \text{if~ $0 \leq x < \frac{1}{2}$}, \\
\displaystyle -1 & \text{if~ $\frac{1}{2}\leq x < 1$}, \\
0 & \text{otherwise }.
\end{cases}
\end{equation*}
The function $\psi$ is referred as basic Haar function in the literature \cite{Pi}. Then the family of Haar functions given by $\{\psi_{jk} (x)= 2^{j/2} \psi (2^j x-k) \},$ where $j,k$ are integers forms an orthonormal basis of $L^2(\mathbb R)$ \cite[Theorem 6.3.4]{Pi}.
Define $\widetilde{\psi_{jk}} :\mathbb R_+\rightarrow \mathbb R$ by $\widetilde{\psi_{jk}}(x)=\psi_{jk} (x)$ for an integer $j$ and a non-negative integer $k.$   
It can be verified that the set ${\cal B}=\{\widetilde{\psi_{jk}}\},$ where $j$ is an integer and $k$ is a non-negative integer is an orthonormal basis of $L^2({\mathbb R_+}).$
A unitary operator $U:L^2(\mathbb R_+)\rightarrow \cal H$ is defined as follows: for each $f\in L^2(\mathbb R_+),$ define an $E-$valued function $U_f$ on the disc $D_r$ as follows:
$$(U_f)(z)=\sum_{n=0}^\infty (PL_t^n f)z^n$$
where $P$ is an orthogonal projection on $E$ and $U(f)=U_f.$ 
Since $U$ is a unitary operator and $\cal B$ is an orthonormal basis of $L^2(\mathbb R_+),$ the set $\{U\widetilde{\psi_{jk}}\},$ where $j$ is an integer and $k$ is a non-negative integer is an orthonormal basis of $\cal H.$
Note that $$(U\widetilde{\psi_{jk}})(z)=\sum_{n=0}^\infty (PL_t^n \widetilde{\psi_{jk}})z^n$$ for some integer $j$ and some non-negative integer $k.$
We now prove that the expression $(U\widetilde{\psi_{jk}})(z)$ contains only finitely many non-zero terms. 
Note that
$$L_t^n\widetilde{\psi_{jk}}(x) = \sqrt {\frac{\varphi(x)}{\varphi(x+nt)}}\widetilde{\psi_{jk}} (x+nt)~\mbox{for all}~ x\geq 0.$$
Therefore $L_t^n\widetilde{\psi_{jk}}=0 ~~{\rm if}~~ nt > \frac{k+1}{2^j}.$
Hence $(U\widetilde{\psi_{jk}})(z)$ is a polynomial of degree at most $[\frac{k+1}{2^j}].$ Thus the Hilbert space $\cal H$ admits an orthonormal basis consisting of E-valued polynomials in $z.$ This proves (iv).
\end{proof}

The following diagram describes the unitary equivalence of a left invertible operator $S_t$ on $L^2(\mathbb R_+)$ with an operator $M_z$ on a reproducing kernel Hilbert space $\cal H$ as stated in Theorem \ref{p21}: 
\begin{center}
\begin{tabular}{ccc}  
$\cal H$ & $\stackrel{M_z}{\longrightarrow}$ & $\cal H$ \\
 & &  \\
$U{\Big\uparrow}$ &           &    ${\Big\downarrow}U^{-1}$  \\
 & &  \\ 
$L^2(\mathbb R_+)$ & $\stackrel{S_t}{\longrightarrow}$ & $L^2(\mathbb R_+)$ 
\end{tabular}
\end{center}
 
We now compute the reproducing kernels in some particular types of weighted translation semigroup $\{S_t\}.$ Recall that the reproducing kernel is a $B(E)$-valued function defined on $D_r\times D_r,$ where $D_r$ is a disc with center origin and radius $r(L_t)^{-1};$ and the formula for $r(L_t)$ is given by
$$r(L_t)=\lim_{n\rightarrow \infty}\Big|\Big|\sqrt {\frac{\varphi(x-nt)}{\varphi(x)}}\Big|\Big|_\infty^{\frac{1}{n}}.$$ 
\begin{example} Let $\{S_t\}$ be a weighted translation semigroup with symbol $\varphi.$
\begin{enumerate} 
\item Let $\varphi(x)=c,$ where $c$ is constant. 
%Then weight function $\varphi_t(x)=1.$ 
%In this special case, the semigroup $\{S_t\}$ is denoted by $\{U_t\}.$ 
In this case, $\frac{\varphi(x)}{\varphi(x+nt)}=1.$ 
Then $r(L_t)=1$ and $D_r$ is the open unit disc. 
%We observe that the reproducing kernel $k_{\cal H}(z,\lambda)$ is the $B(E)$-valued Cauchy kernel and the reproducing kernel Hilbert space $\cal H$ is the E-valued Hardy space of the open unit disc.
It is easy to see that for $e\in E$ and $z,\lambda \in D_r,$
%We now compute the reproducing kernel for the RKHS associated to the semigroup $\{U_t\}.$ For $e\in E,$
%\beqn
%k_{\cal H}(z,\lambda)e(x)=\sum_{n,k\geq 0}(PL^n(L^*)^ki_E)e(x)z^n\overline{\lambda}^k
%=\sum_{n,k\geq 0}(PL^n(L^*)^k)e(x)z^n\overline{\lambda}^k \\
%=\sum_{n=0}^\infty (PL^n(L^*)^n)e(x)z^n\overline{\lambda}^n
%=\sum_{n=0}^\infty Pe(x)z^n\overline{\lambda}^n 
%=\sum_{n=0}^\infty e(x)z^n\overline{\lambda}^n
%=e(x)\sum_{n=0}^\infty z^n\overline{\lambda}^n. 
%\eeqn
$$ k_{\cal H}(z,\lambda)e(x)=\left(\sum_{n=0}^\infty z^n\overline{\lambda}^n\right)e(x)=\frac{1}{1-z\overline{\lambda}}e(x).$$
%Thus, in this case, $\displaystyle k_{\cal H}(z,\lambda)$ is the $B(E)$ valued Cauchy kernel and $\cal H$ is the E-valued Hardy space of the open unit disc.
\item Let $\varphi(x)=x+1.$ 
In this case, $$\displaystyle \frac{\varphi(x)}{\varphi(x+nt)}=\frac{x+1}{x+1+nt}=1-\frac{nt}{x+1+nt}.$$
Therefore $r(L_t)=1$ and $D_r$ is the open unit disc.
For $e\in E$ and $z,\lambda \in D_r,$ the computations reveal that 
\beqn
k_{\cal H}(z,\lambda)e(x)
%&=& \left(\sum_{n=0}^\infty \frac{\varphi(x)}{\varphi(x+nt)}z^n\overline{\lambda}^n\right)e(x) \\
%&=& \left(\sum_{n=0}^\infty (1-\frac{nt}{x+1+nt})z^n\overline{\lambda}^n\right)e(x)\\
&=& \left(\frac{1}{1-z\overline{\lambda}}-\sum_{n=0}^\infty \frac{nt}{x+1+nt}z^n\overline{\lambda}^n\right)e(x).
\eeqn

\item Let $\displaystyle \varphi(x)=\frac{1}{x+1}.$ 
In this case, $$\frac{\varphi(x)}{\varphi(x+nt)}=\frac{x+1+nt}{x+1}=1+\frac{nt}{x+1}.$$
%Hence
Therefore $r(L_t)=1$ and $D_r$ is the open unit disc.
For $e\in E$ and $z,\lambda \in D_r,$ it can be seen that  
\beqn
k_{\cal H}(z,\lambda)e(x)
%&=& \left(\sum_{n=0}^\infty \frac{\varphi(x)}{\varphi(x+nt)}z^n\overline{\lambda}^n\right)e(x) \\
%&=& \left(\sum_{n=0}^\infty (1+\frac{nt}{x+1})z^n\overline{\lambda}^n\right)e(x) \\
%&=& \left(\sum_{n=0}^\infty z^n\overline{\lambda}^n +\frac{t}{x+1}z\overline{\lambda}\sum_{n=0}^\infty nz^{n-1}\overline{\lambda}^{n-}\right)e(x) \\
&=& \left(\frac{1}{1-z\overline{\lambda}}+\frac{t}{x+1}\frac{z\overline{\lambda}}{(1-z\overline{\lambda})^2}\right)e(x).
\eeqn

\item Let
\begin{equation*}
\varphi(x) =
\begin{cases}
x+1 & \text{if~ $0\leq x\leq 1$},\\
2 & \text{if~ $x>1$}.
\end{cases}
\end{equation*}
If $x\geq 1,$ then $x+nt\geq 1$ for all $n \in \mathbb N,t \in \mathbb R_+.$ In this case, $\frac{\varphi(x)}{\varphi(x+nt)}=1.$ For $x<1,$ there exists $N\in \mathbb N$ such that $x+Nt\leq 1,$ but $x+(N+1)t>1.$ Then 
\begin{equation*}
\frac{\varphi(x)}{\varphi(x+nt)}=
\begin{cases}
\frac{x+1}{x+nt+1} & \text{if~ $n\leq N$},\\
\frac{x+1}{2} & \text{if~ $n>N$}.
\end{cases}
\end{equation*}
Observe that $r(L_t)=1$ and $D_r$ is the open unit disc.
It can be seen that for $e\in E$ and $z,\lambda \in D_r,x\geq 1,$
$$ k_{\cal H}(z,\lambda)e(x)=\left(\sum_{n=0}^\infty z^n\overline{\lambda}^n\right)e(x)=\frac{1}{1-z\overline{\lambda}}e(x)$$
and for $x<1,$
\beqn
k_{\cal H}(z,\lambda)e(x) &=& \left(\sum_{n=0}^N \frac{x+1}{x+nt+1} z^n\overline{\lambda}^n + \sum_{n=N+1}^\infty \frac{x+1}{2} z^n\overline{\lambda}^n \right)e(x) \\
&=& \left(\sum_{n=0}^N \left(\frac{x+1}{x+nt+1}-\frac{x+1}{2} \right)z^n\overline{\lambda}^n + \frac{x+1}{2} \frac{1}{1-z\overline{\lambda}} \right) e(x).
\eeqn

\item Let $\varphi (x)=a^x,~a>1.$
In this case, $\frac{\varphi(x)}{\varphi(x+nt)}=\frac{a^x}{a^{x+nt}}=a^{-nt}.$
Therefore $r(L_t)=a^{-t}$ and $D_r$ is the disc with radius $a^t.$
For $e\in E$ and $z,\lambda \in D_r,$ we observe that  
\beqn
k_{\cal H}(z,\lambda)e(x)
%=\left(\sum_{n=0}^\infty \frac{\varphi(x)}{\varphi(x+nt)}z^n\overline{\lambda}^n\right)e(x)
=\left(\sum_{n=0}^\infty a^{-nt}z^n\overline{\lambda}^n\right)e(x)
=\frac{1}{1-a^{-t}z\overline{\lambda}}e(x).
\eeqn
\end{enumerate} 
In the light of \cite[Corollary 3.3]{PS}, we observe that in example (1) the semigroup is an isometry, in example (2) the semigroup is a 2-isometry , in example (3) the semigroup is a subnormal contraction, in example (4) the semigroup is 2-hyperexpansive and in example (5), the semigroup is alternatingly hyperexpansive. \eop
\end{example}

\subsection{Operator Valued Weighted Shift}
An operator valued weighted shift is a generalization of a weighted shift operator in the sense that the weight sequence is a sequence of operators. In this subsection, we prove that for every $t>0,$ a  left invertible operator $S_t$ is an operator valued weighted shift. We begin with the definition of an operator valued weighted shift. Let $H$ be a nonzero Hilbert space. The Hilbert space denoted by $l^2_H$ is the Hilbert space of all vector sequences $\{h_n\}_{n=0}^\infty \subseteq H$ such that $\sum_{n=0}^\infty ||h_n||^2 < \infty,$ equipped with the standard inner product 
\beqn \left\langle \{g_n\}_{n=0}^\infty, \{h_n\}_{n=0}^\infty \right\rangle 
=\sum_{n=0}^\infty \langle g_n,h_n \rangle ,~~~~~\{g_n\}_{n=0}^\infty, \{h_n\}_{n=0}^\infty \in l^2_H.
\eeqn
If $\{W_n\}_{n=0}^\infty \subseteq {\cal B(H)}$ is a uniformly bounded sequence of operators, then the operator $W\in B(l^2_H)$ defined by $$W(x_0,x_1,\cdots  )=(0,W_0x_0,W_1x_1,\cdots ),~~~~~~(x_0,x_1,\cdots  )\in l^2_H$$ is called an {\it operator valued weighted shift} with weights $\{W_n\}.$ For basic theory of operator valued weighted shifts, the reader is referred to \cite{Jb2},\cite{L}. 

\begin{theorem} \label{p26} For every $t>0,$ a left invertible operator $S_t$ is an operator valued weighted shift.\end{theorem}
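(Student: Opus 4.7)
The plan is to exhibit a unitary $V: L^2(\mathbb{R}_+) \to \ell^2_H$ with $H := E = \ker S_t^*$, and a uniformly bounded sequence $\{W_n\}$ of operators on $H$ so that $V S_t V^{-1}$ is precisely the operator-valued weighted shift with weights $\{W_n\}$. The natural choice of $H$ is $E$ because Proposition \ref{p19} (wandering subspace property) combined with Lemma \ref{p28} (mutual orthogonality of the $S_t^n E$) yields the orthogonal decomposition
\begin{equation*}
L^2(\mathbb{R}_+) = \bigoplus_{n\geq 0} S_t^n E.
\end{equation*}
By the proof of Lemma \ref{p28} we have $S_t^n E \subseteq \chi_{[nt,(n+1)t)} L^2(\mathbb{R}_+)$, and the hypothesis $\inf_x \varphi(x+t)/\varphi(x) > 0$ guarantees that this inclusion is in fact equality (the condition ensures that the natural preimage lies in $L^2[0,t)$).

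The first main step is to introduce, for each $n\geq 0$, the translation unitary $V_n:\chi_{[nt,(n+1)t)} L^2(\mathbb{R}_+) \to E$ defined by $(V_n f)(y) = f(y+nt)$ for $y \in [0,t)$. Since these summands are mutually orthogonal and their direct sum is all of $L^2(\mathbb{R}_+)$, the family $\{V_n\}$ assembles into a unitary $V:L^2(\mathbb{R}_+) \to \ell^2_E$ sending $f$ to $\bigl(V_n(\chi_{[nt,(n+1)t)} f)\bigr)_{n\geq 0}$.

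The second main step is the coordinate computation of $V S_t V^{-1}$. For $f \in \chi_{[nt,(n+1)t)} L^2(\mathbb{R}_+)$ with $g := V_n f \in E$, the explicit formula for $S_t$ yields, for $x \in [(n+1)t,(n+2)t)$, $(S_t f)(x) = \sqrt{\varphi(x)/\varphi(x-t)}\,g(x-(n+1)t)$, and therefore
\begin{equation*}
(V_{n+1} S_t f)(y) = \sqrt{\frac{\varphi(y+(n+1)t)}{\varphi(y+nt)}}\, g(y), \qquad y \in [0,t).
\end{equation*}
Defining $W_n:E\to E$ by $(W_n g)(y) = \sqrt{\varphi(y+(n+1)t)/\varphi(y+nt)}\,g(y)$, we see that $V S_t V^{-1}$ shifts the $n$-th coordinate to the $(n+1)$-th via $W_n$, with the $0$-th coordinate of the image equal to $0$. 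This is precisely the definition of an operator-valued weighted shift with weights $\{W_n\}$.

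Finally, uniform boundedness of $\{W_n\}$ is immediate: since each $W_n$ is a multiplication operator on $L^2[0,t)$, $\|W_n\| = \operatorname*{ess\,sup}_{y\in[0,t)}\varphi_t(y+(n+1)t) \leq \|\varphi_t\|_\infty = \|S_t\| < \infty$. I do not anticipate a serious obstacle here: the wandering subspace decomposition and the description of $E$ from Lemma \ref{p27} have already done the structural work, and the argument reduces to an explicit (but short) change-of-variable computation together with verification of the uniform norm bound.
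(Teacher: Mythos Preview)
Your proof is correct and rests on the same structural ingredients as the paper's: Proposition~\ref{p19} and Lemma~\ref{p28} combine to give the orthogonal decomposition $L^2(\mathbb R_+)=\bigoplus_{n\geq 0} S_t^nE$, and from there one identifies $S_t$ with an operator-valued shift. The paper completes the argument by invoking \cite[Theorem~3.8]{ACJ}, whereas you carry out the identification explicitly by using the translation unitaries $V_n$ and computing the weights as the multiplication operators $(W_ng)(y)=\sqrt{\varphi(y+(n+1)t)/\varphi(y+nt)}\,g(y)$ on $E$. Your route has the advantage of being self-contained and of producing concrete formulas for the weights (and the uniform bound $\|W_n\|\leq\|\varphi_t\|_\infty$), at the cost of a short extra computation; the paper's version is terser but relies on an external reference for the final step.
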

\begin{proof} Recall that for $t>0,$ the operator $S_t$ possesses the wandering subspace property [Proposition \ref{p19}]. That is $\displaystyle L^2(\mathbb R_+)=\vee_{n=0}^\infty S_t^nE,$ where the subspace $E=$ ker $S_t^*.$ 
In addition, the closed subspaces $\{S_t^nE\}$ are mutually orthogonal [Lemma \ref{p28}]. Hence $\displaystyle L^2(\mathbb R_+)=\oplus_{n=0}^\infty S_t^nE.$ 
Now by the similar argument as given in \cite[Theorem 3.8]{ACJ}, the operator $S_t,t>0$ is an operator valued weighted shift. 
\end{proof}

\section{Spectral Picture}
In this section, we describe the spectral picture of an operator in a left invertible semigroup $\{S_t\}$ and compute the spectrum in some particular examples. As in case of a weighted shift operator, it turns out that the spectrum of $S_t,t>0$ is a disc and the point spectrum of $S_t,t>0$ is empty. In this section, we assume that $t$ is a positive real number.

We use the following notation in the sequel: 
$\sigma (T):$ Spectrum of $T,$
$\sigma_{ap} (T):$ Approximate point spectrum of $T,$
$\sigma_p (T):$ Point spectrum of $T,$
$\sigma_e (T):$ Essential spectrum of $T.$
 
We begin with an observation about the circular symmetry of the spectrum of $S_t.$
%The following result is useful in proving the circular symmetry of the spectrum.
\begin{proposition} \label{p31} The operator $S_t$ is unitarily equivalent to the operator $e^{-i\theta t}S_t$ for any real number $\theta .$ \end{proposition}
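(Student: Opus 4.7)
The plan is to exhibit an explicit unitary $U_\theta$ on $L^2(\mathbb R_+)$ that intertwines $S_t$ with $e^{-i\theta t}S_t$. The natural candidate, motivated by the fact that $S_t$ is a translation-type operator, is the modulation operator $(U_\theta f)(x) = e^{-i\theta x}f(x)$, since multiplication by a unimodular function is an isometric isomorphism of $L^2(\mathbb R_+)$ and translation interacts with modulation by producing a constant phase factor.

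First I would verify that $U_\theta$ is indeed unitary on $L^2(\mathbb R_+)$: it is clearly linear, it preserves the $L^2$-norm because $|e^{-i\theta x}|=1$ almost everywhere, and its inverse is the modulation $U_{-\theta}$ (multiplication by $e^{i\theta x}$). Then I would compute the composition $U_\theta S_t U_\theta^{-1}$ pointwise. For $x\ge t$ and $f\in L^2(\mathbb R_+)$,
\begin{equation*}
(U_\theta S_t U_\theta^{-1}f)(x) = e^{-i\theta x}\sqrt{\frac{\varphi(x)}{\varphi(x-t)}}\,e^{i\theta(x-t)}f(x-t) = e^{-i\theta t}\sqrt{\frac{\varphi(x)}{\varphi(x-t)}}\,f(x-t),
\end{equation*}
and $(U_\theta S_t U_\theta^{-1}f)(x) = 0$ for $x<t$. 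Comparing with the defining formula for $S_t$ immediately yields $U_\theta S_t U_\theta^{-1} = e^{-i\theta t}S_t$, proving the unitary equivalence claimed.

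There is no real obstacle here: the whole argument rests on the algebraic identity $e^{-i\theta x}e^{i\theta(x-t)} = e^{-i\theta t}$, which decouples the exponential phase from the variable $x$. The only cautionary point is to confirm that the modulation preserves $L^2(\mathbb R_+)$ (as opposed to $L^2(\mathbb R)$), but this is immediate since the support of $f$ is unaffected by multiplication by a unimodular factor. The same computation will be useful in Section 4 for extracting circular symmetry of $\sigma(S_t)$, since unitary equivalence forces $\sigma(S_t) = e^{-i\theta t}\sigma(S_t)$ for every real $\theta$.
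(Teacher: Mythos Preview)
Your proof is correct and is essentially identical to the paper's: the paper defines the modulation $M_\theta f(x)=e^{i\theta x}f(x)$ and verifies $M_\theta^* S_t M_\theta = e^{-i\theta t}S_t$, which is the same computation as your $U_\theta S_t U_\theta^{-1}$ after noting $U_\theta = M_\theta^*$. The only cosmetic difference is the sign convention in the exponent and writing the conjugation with the adjoint rather than the inverse.
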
 
\begin{proof} For a real number $\theta,$ define the map $M_\theta :L^2(\mathbb R_+)\rightarrow L^2(\mathbb R_+)$ as $(M_\theta f)(x)=e^{i\theta x}f(x).$ Then clearly $M_\theta$ is a unitary operator.
We claim that $\displaystyle M^*_\theta S_tM_\theta=e^{-i\theta t}S_t.$
Note that
\beqn
((M^*_\theta S_tM_\theta)f)(x)&=& M^*_\theta (S_tM_\theta f)(x)=e^{-i\theta x}(S_tM_\theta f)(x) \\ 
& =&
\begin{cases}
\displaystyle e^{-i\theta x}\sqrt {\frac{\varphi(x)}{\varphi(x-t)}}(M_\theta f)(x-t) & \text{if~ $x\geq t,$}\\
0 & \text{if~ $x<t$}
\end{cases}
%\end{equation*}
%\begin{equation*}
\\ &=&
\begin{cases}
\displaystyle e^{-i\theta x}\sqrt {\frac{\varphi(x)}{\varphi(x-t)}}e^{i\theta (x-t)}f(x-t) & \text{if~ $x\geq t,$}\\
0 & \text{if~ $x<t$}
\end{cases}
%\end{equation*}
\\ &=& ((e^{-i\theta t}S_t)f)(x).\eeqn 
Hence the operator $S_t$ is unitarily equivalent to the operator $e^{-i\theta t}S_t.$
\end{proof} 

\begin{remark} From Proposition \ref{p31}, the spectrum of $S_t$ as well as various spectral parts have circular symmetry about the origin. Also note that the operator $S_t$ is not onto. Indeed, the characteristic function of the interval $[0,t)$ does not belong to the range of $S_t.$ Therefore $0\in \sigma (S_t).$\end{remark}

We now describe the spectrum of a left invertible operator $S_t.$
\begin{proposition} \label{p23} Let $S_t$ be a left invertible operator.
\begin{enumerate} 
\item The point spectrum of the operator $S_t,\sigma_p(S_t)$ is empty.
\item The point spectrum of $S_t^*$ contains the disc $D_r$ with center origin and radius $r(L_t)^{-1}.$
\item The spectrum of the operator $S_t,~\sigma (S_t)$ is a closed disc with center origin and radius $r(S_t).$
\end{enumerate} \end{proposition}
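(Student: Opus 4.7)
The plan is to handle the three parts in sequence, drawing on the analytic model of Theorem \ref{p21}, the circular symmetry of Proposition \ref{p31}, and the spectral-radius formula for $S_t$ recorded just before the statement.

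For (1), I would assume $S_tf=\lambda f$. If $\lambda=0$, left invertibility of $S_t$ gives injectivity, forcing $f=0$. If $\lambda\neq 0$, the defining identity yields $\lambda f(x)=(S_tf)(x)=0$ on $[0,t)$, so $f$ vanishes there, and a short induction on the intervals $[kt,(k+1)t)$ using $\varphi_t(x)f(x-t)=\lambda f(x)$ propagates the vanishing to all of $\mathbb R_+$, giving $\sigma_p(S_t)=\emptyset$.

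For (2), I would pass through the analytic model. Fix $\lambda\in D_r$ and a nonzero $e\in E$; the reproducing-kernel section $k_{\cal H}(\cdot,\lambda)e$ lies in $\cal H$, and the explicit formula from Theorem \ref{p21}(ii) shows that it reduces to the nonzero function $e$ at $z=0$. The standard RKHS identity
$${\cal M}_z^{\,*}\,k_{\cal H}(\cdot,\lambda)e \;=\; \overline{\lambda}\,k_{\cal H}(\cdot,\lambda)e$$
then yields $\overline{\lambda}\in\sigma_p({\cal M}_z^{\,*})=\sigma_p(S_t^*)$ through the unitary equivalence provided by Theorem \ref{p21}. Since $D_r$ is invariant under complex conjugation, the entire disc $D_r$ sits inside $\sigma_p(S_t^*)$.

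For (3), I would combine the spectral-radius bound $\sigma(S_t)\subseteq\overline{D(0,r(S_t))}$, the containment $\overline{D(0,r(L_t)^{-1})}\subseteq\sigma(S_t)$ (which follows from (2) together with $\sigma(S_t)=\{\overline{\mu}:\mu\in\sigma(S_t^*)\}$ and the conjugation-symmetry of $D_r$), and the circular symmetry provided by Proposition \ref{p31}. If one can establish the identity $r(S_t)=r(L_t)^{-1}$, these two inclusions pinch $\sigma(S_t)$ to the full closed disc $\overline{D(0,r(S_t))}$. I expect this identity to be the main obstacle: one route is to use the analytic model and argue that $\sigma({\cal M}_z)=\overline{D_r}$ by verifying that $1/(z-\lambda)$ acts as a bounded multiplier on $\cal H$ for every $\lambda$ with $|\lambda|>r(L_t)^{-1}$; another is to compare the spectral-radius formulas for $r(S_t)$ and $r(L_t)^{-1}$ directly, exploiting the semigroup identity $S_t^n=S_{nt}$ and the continuity of $\varphi$ to show that the ratio between $\sup_y\varphi(y+nt)/\varphi(y)$ and $\inf_y\varphi(y+nt)/\varphi(y)$ grows only subexponentially in $n$, which would force their $n$-th roots to share a common limit.
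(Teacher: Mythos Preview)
Your arguments for (1) and (2) are essentially the paper's: injectivity of $S_t-\lambda I$ is shown by the same induction on the intervals $[kt,(k+1)t)$, and the eigenvector identity ${\cal M}_z^{\,*}k_{\cal H}(\cdot,\lambda)e=\overline{\lambda}\,k_{\cal H}(\cdot,\lambda)e$ is exactly what the paper invokes.

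The gap is in (3). Your plan hinges on the identity $r(S_t)=r(L_t)^{-1}$, and this identity is \emph{false} in general. Take $\varphi(x)=e^{g(x)}$ where $g$ is continuous, piecewise linear, with slope alternating between $2$ and $0$ on successive blocks of lengths $1,2,4,8,\ldots$. Then $0\le g(y+t)-g(y)\le 2t$ for all $y$, so each $S_t$ is bounded and left invertible; but for every $n$ one can place the interval $[y,y+nt]$ entirely inside a slope-$2$ block (giving $\sup_y(g(y+nt)-g(y))=2nt$) or entirely inside a slope-$0$ block (giving $\inf_y(g(y+nt)-g(y))=0$). Hence $r(S_t)=e^{t}$ while $r(L_t)^{-1}=1$. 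Both of your proposed routes therefore fail: the multiplier argument would force $r(S_t)\le r(L_t)^{-1}$, and the ``subexponential ratio'' claim is simply not true under the standing hypotheses.

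The paper closes (3) by a different, cleaner idea: it quotes the fact (\cite[Lemma 5.3]{CT}) that the spectrum of an \emph{analytic} operator is connected. Since $S_t$ is analytic (Theorem \ref{p17}), $\sigma(S_t)$ is a compact connected set which, by Proposition \ref{p31}, is invariant under all rotations and, by part (2), contains the open disc $D_r$ (in particular a neighbourhood of $0$). A rotation-invariant compact connected set containing a neighbourhood of the origin is necessarily a closed disc, and its radius is $r(S_t)$ by definition of the spectral radius. No comparison between $r(S_t)$ and $r(L_t)^{-1}$ is needed.
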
 
\begin{proof} \
\begin{enumerate} 
\item The proof readily follows from the fact that for any complex number $\lambda,$ the operator $S_t-\lambda I$ is injective.  
\item The fact that the operator $S_t$ is unitarily equivalent to the operator ${\cal M}_z$ on the reproducing kernel Hilbert space $\cal H$ is used in the following proof. By Theorem \ref{p21} (i), for $f\in L^2(\mathbb R_+),~e\in E$ and $w\in D_r,$
$$ \left\langle U_f, M_z^*k_{\cal H}(.,w)e\right\rangle_{\cal H} = \left\langle M_z U_f, k_{\cal H}(.,w)e\right\rangle_{\cal H}
=\left\langle wU_f(w), e\right\rangle_E
= \left\langle U_f, \overline{w}k_{\cal H}(.,w)e\right\rangle_{\cal H} .$$
Thus $M_z^*k_{\cal H}(.,w)e=\overline{w}k_{\cal H}(.,w)e$ for all $w\in D_r$ and $e\in E.$ Hence the point spectrum of $S_t^*$ contains the disc $D_r.$
\item By part (2), $D_r\subseteq \sigma_p(S_t^*).$ The circular symmetry of spectrum implies that $\sigma(S_t)=\sigma(S_t^*).$ Therefore $D_r\subseteq \sigma(S_t).$ By \cite[Lemma 5.3]{CT}, the spectrum of an analytic operator is always connected. Hence, $\sigma (S_t)$ is a closed disc with radius $r(S_t).$ \end{enumerate} 
This completes the proof of the theorem.
\end{proof}
At this stage, we point out that Proposition \ref{p23} (1) and (3), proved independently here, can be also looked upon as a consequence of \cite[Lemma 2.2(iii)]{L}. 
We now turn our attention towards the approximate point spectrum of an operator $S_t.$ Recall that for any operator $T,$ $$m(T):=\inf \{||Tf||:||f||=1\} ~\rm{and} ~ r_1(T)=\lim_{n\rightarrow \infty}[m(T^n)]^{\frac{1}{n}}.$$
At this stage, we record the following well known result useful in determining the approximate point spectrum of an operator \cite{Ri}. 
For any operator $T,$ the approximate point spectrum $$\sigma_{ap} (T)\subseteq \{z~:~r_1(T)\leq |z| \leq r(T)\}.$$
We now compute the spectra of the semigroups $\{S_t\}$ associated to some special types of symbols $\varphi .$
\begin{example} Let $\{S_t\}$ be a weighted translation semigroup with symbol $\varphi.$  
\begin{enumerate}
\item Let $\varphi(x)=c.$ 
It is easy to see that $r_1(S_t)=r(S_t)=1.$ Therefore $\sigma_{ap} (S_t)$ is a unit circle and $\sigma (S_t)$ is a closed unit disc.
\item Let $\varphi(x)=x+1.$
In this case, $r_1(S_t)=r(S_t)=1.$ Hence $\sigma_{ap} (S_t)$ is a unit circle and $\sigma (S_t)$ is a closed unit disc. 
\item Let $\varphi(x)=e^{2x}.$ 
In this case, $r_1(S_t)=r(S_t)=e^t.$ Thus $\sigma_{ap} (S_t)$ is a circle with radius $e^t$ and $\sigma (S_t)$ is a closed disc with radius $e^t.$ 
\end{enumerate} 
Note that in example (1) the semigroup is an isometry, in example (2) the semigroup is a 2-isometry and in example (3), the semigroup is alternatingly hyperexpansive (refer \cite[Corollary 3.3]{PS}).
\eop
\end{example} 

\begin{remark} Atkinson's theorem \cite[Theorem 5.17]{D} asserts that an operator $T\in \cal B(H)$ is essentially invertible if and only if ker $T$ is finite dimensional, ker $T^*$ is finite dimensional and range of $T$ is closed. 
Here, we know that  ker $S_t=\{0\}$ and ker ${S_t}^*$ is infinite dimensional.
As a consequence, the operator $S_t$ is not essentially invertible and thus $0\in  ~\sigma_e(S_t).$  We know that for a weighted shift operator $T,$ ker $T=\{0\}$ and ker $T^*$ is one dimensional. Therefore if the range of a weighted shift operator $T$ is closed, then it is essentially invertible. If $T$ is a completely hyperexpansive weighted shift, then the essential spectrum of $T,~\sigma_e(T)$ is a unit circle \cite[Proposition 5]{At}. If $T$ is a hyponormal weighted shift, then $\sigma_e(T)$ is a circle with center origin and radius $||T||$ \cite[Theorem 6.7(a)]{Co}. Thus in these cases, the essesntial spectrum of a weighted shift operator and that of a weighted translation semigroup are not the same.
\end{remark} 

%\section{Concluding Remarks} 
%The present work attempts to study the operator theoretic properties  of an operator of the form $S_t$ with a motivation to compare its behaviour with weighted shifts. The appearance of a function in place of a sequence allows one to use different techniques, not available in the discrete case. The authors believe that these techniques might be employed to tackle some problems about weighted shifts. In particular, the problem of characterizing $m$-isomeric transformations whose Cauchy dual is a contractive subnormal operator is challenging even in the case of a weighted shift operator. Though some special cases of this problem have been discussed in \cite{AC}, the general case still remains unanswered. Further, the operators of the type $S_t$ have been shown to be rather different from weighted shifts and the study of such operators gives a variety of new examples of operators. The authors propose to undertake a deeper study of this class. In particular, we shall attempt to extend the notion to a multi-variable set-up.    

\subsection*{Acknowledgment} The authors are thankful to an unknown referee for pointing out some corrections in the original version and offering some useful suggestions. The authors would also like to thank Sameer Chavan for several useful discussions throughout the preparation of this paper.

\end{document}